\newcommand{\Z}{\mathbb{Z}}
\newcommand{\Q}{\mathbb{Q}}
\newcommand{\C}{\mathbb{C}}
\newcommand{\F}{\mathbb{F}}
\newcommand{\inj}{\hookrightarrow}
\newcommand{\longinj}{\longhookrightarrow}
\newcommand{\T}{\mathbb{T}}
\newcommand{\G}{\mathcal{G}}
\newcommand{\tensor}{\otimes}
\newcommand{\X}{\mathcal{X}}
\newcommand{\End}{\operatorname{End}}
\newcommand{\Aut}{\operatorname{Aut}}
\newcommand{\Fr}{\operatorname{Fr}}
\newcommand{\id}{\operatorname{id}}
\newtheorem{theorem}{Theorem}
\newtheorem{lemma}[theorem]{Lemma}
\newtheorem{corollary}[theorem]{Corollary}
\theoremstyle{definition}
\title{Automorphisms of the Supersingular Isogeny Graph}
\author{Sam Mayo}
\begin{document}

\maketitle

\begin{abstract}
We provide a condition for which the supersingular $l$-isogeny graph in characteristic $p$ has only one nontrivial automorphism, given by the action of Frobenius. For a fixed $p$, our condition is known to hold for a density 1 set of primes $l$.
\end{abstract}

\section{Introduction}

Fix a prime $p$. Given a prime $l\neq p$, the \textit{supersingular $l$-isogeny graph}, which we denote $\G(p,l)$, is a finite graph constructed from supersingular elliptic curves over $\overline{\F}_p$ and degree $l$ isogenies between them (see section \ref{background}). The graphs arise naturally in the study of bad reduction mod $p$ of quaternionic Shimura curves \cite{isogbh}\cite{rib100}. An important feature of these graphs, with a multitude of real-life applications, is that they are \textit{Ramanujan} \cite{pizram}, meaning random walks on their vertices converge to the uniform distribution at theoretically-optimal rates. Based on this, Charles, Goren, and Lauter \cite{chargorlaut} proposed a post-quantum cryptographic hash function based on the hardness of finding a path between two vertices. After, De Feo, Jao, and Plût \cite{defjao} put forward a Diffie-Hellman key exchange using the graph, now known not to be as secure as hoped \cite{sidhattack}. We recommend the surveys \cite{isogbh}\cite{ramcrypt} for an introduction to the graphs and their uses in cryptography and arithmetic geometry. 
The structure of supersingular isogeny graphs has also been studied in the literature—see, for example, \cite{advents} and \cite{reviewerpaper}.

As other isogeny-based cryptographic schemes have been proposed, such as \textit{CSIDH} \cite{csidh} and \textit{SQISign} \cite{sqisign}, it has become increasingly important to investigate the hardness assumptions underlying these protocols. A possible attack, for instance to the path-finding problem, could rely on an analysis of the large scale structure of the graph. In this paper we show that the graphs have very little symmetry, which could be interpreted as evidence against the possibility of such an attack. Specifically, let $\Aut^*(\G(p,l))$ denote the automorphism group of $\G(p,l)$, modulo the normal subgroup which fixes every vertex (see section \ref{background}). When $p>71$, we show (Theorem \ref{autthm}) that $\Aut^*(\G(p,l))\cong\Z/2\Z$, given a condition on the operator $T_l$ in the modular Hecke algebra on the space of weight 2 cusp forms for $\Gamma_0(p)$. From a result of Koo, Stein, and Wiese \cite{heckecoeff}, we deduce (Corollary \ref{autcor}) that this condition holds for a density 1 set of primes $l$.
\section*{Acknowledgements}
This paper is the result of an undergraduate summer research project at McGill University under a SURA grant. I would like to thank Eyal Goren for supervising this project and suggesting the question that we answer. Our conversations were instrumental in coming to the result. I would also like to thank Hazem Hassan, Antoine Labelle, and Marti Roset Julia for indulging my endless questions.
\section{Background}\label{background}

Let $\mathcal{J}=\{E_0,\dots,E_g\}$ be representatives for the isomorphism classes of supersingular elliptic curves over $\overline{\F}_p$. The graph $\G(p,l)$ is defined as follows: The vertex set of $\G(p,l)$ is $\mathcal{J}$, and there is a directed edge from $E_i$ to $E_j$ for every subgroup $C\subset E_i$ of order $l$ such that $E_i/C\cong E_j$. That is, there is a directed edge from $E_i$ to $E_j$ for every isogeny $E_i\to E_j$ of degree $l$, considered up to post-composition with automorphisms of $E_j$.

The graphs $\G(p,l)$ frequently have multi-edges, and we are not interested in the trivial automorphisms given by swapping such edges. To this end, we define $\Aut^*(\G(p,l))$ to be the usual automorphism group $\Aut(\G(p,l))$, modulo the normal subgroup which fixes every vertex. An element of $\Aut^*(\G(p,l))$ can be represented by a permutation of the vertex set $\mathcal{J}$ that preserves the adjacency structure.

Let $X_0(p)$ be the compactified modular curve associated to $\Gamma_0(p)$, which is known to have genus $g=|\mathcal{J}|-1$. Let $S_2(\Gamma_0(p))$ denote the space of weight 2 cusp forms for $\Gamma_0(p)$. We recall a close relationship between the supersingular isogeny graph and the mod $p$ reduction of $X_0(p)$, closely following \cite{rib100} (especially pp. 443-446):

Let $J_0(p)$ denote the Jacobian variety of $X_0(p)$, and let $\T$ be the subalgebra of $\End(J_0(p))$ generated by the Hecke correspondences $T_n$, for all $n$ (see \cite[Chapter 7]{shimura} for a definition). Let $\T_\C:=\T\tensor\C$ and $\T_\Q:=\T\tensor\Q$. By considering the action on the cotangent space at the origin of $J_0(p)$, the algebra $\T$ is identified with the classical Hecke algebra acting on $S_2(\Gamma_0(p))$.

It is known that $\mathcal{J}$ is in bijection with the singular points of $X_0(p)_{\F_p}$, the reduction mod $p$ of the canonical integral model of $X_0(p)$. Let $\X=\bigoplus_{i=0}^g\Z \cdot E_i$, and let $\X^0\subset\X$ be the submodule where the coefficients of the $E_i$ sum to 0. Let $T$ be the $g$-dimensional torus equal to the connected component of the mod $p$ fiber of the Neron model of $J_0(p)$. The Hecke correspondences act on $T$, and this induces an action on the character group of $T$, which is isomorphic to $\X^0$. By \cite[Theorem 3.10]{rib100}, $\T$ acts faithfully on $T$, and the action of $\T$ on $T$ can be recovered from its action on $\X^0$, so we can identify $\T$ with its image in $\End(\X^0)$ (see also \cite[Theorem 3.1(ii)]{em}).

We can describe the action of $\T$ on $\X^0$ through its action on $\mathcal{J}$ by the usual ``modular rules.'' For all $E\in\mathcal{J}$ and $l\neq p$ we have:
\begin{equation}\label{Tdesc}
    T_l(E)=\sum_{C}E/C,
\end{equation}
where the sum is over all subgroups of $E$ of order $l$, and where the quotient $E/C$ is taken to be the representative in $\mathcal{J}$ of its isomorphism class. Note that this description of $T_l$ on $\X$ is exactly that of the adjacency operator of $\G(p,l)$. We make significant use of this identification in the sequel.

The $p$'th power Frobenius map $E\to E^{(p)}$ induces an automorphism $\Fr:\mathcal{J}\to\mathcal{J}$. By \cite[Proposition 3.8(ii)]{rib100}, $T_p$ acts on $\X^0$ by the restriction of the automorphism of $\X$ induced by $\Fr$. Since $\T$ is commutative, in particular $T_l$ and $T_p$ commute, so it follows that $\Fr$ induces an element of $\Aut^*(\G(p,l))$.
\section{Results}

Above, we identified an automorphism $\Fr\in\Aut^*(\G(p,l))$, given by the $p$'th power Frobenius map. Clearly if the $j$-invariants of all the vertices lie in $\F_p$ then this automorphism is trivial. Ogg \cite[$\S3$]{oggfrench} ruled out this possibility when $p>71$, so we work in this case for simplicity. Now we may state our main result:

\begin{theorem}\label{autthm}
    Let $p,l$ be primes with $p>71$. Suppose that $T_l$ generates over $\C$ the whole complexified Hecke algebra, i.e. $\C[T_l]=\T_\C$. Then $\Aut^*(\G(p,l))=\{\id,\Fr\}\cong\Z/2\Z$.
\end{theorem}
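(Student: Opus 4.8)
The plan is to translate graph automorphisms into linear algebra on the supersingular module $\X$ and to read the hypothesis $\C[T_l]=\T_\C$ as a statement about the spectrum of the adjacency operator. By the identification in \eqref{Tdesc}, the matrix of $T_l$ in the basis $\{E_i\}$ is exactly the (directed, weighted) adjacency matrix of $\G(p,l)$, so a vertex automorphism preserving edge multiplicities is precisely a permutation matrix $P$ on $\X$ with $PT_l=T_lP$; thus $\Aut^*(\G(p,l))$ is the group of such $P$. The first step is to note that $\C[T_l]=\T_\C$ forces $T_l$ to have simple spectrum on $\X_\C$. On the cuspidal part $\X^0_\C$ the semisimple algebra $\T_\C\cong\C^g$ is generated by $T_l$ exactly when the eigenvalues $a_l(f)$, ranging over the $g$ eigenforms $f$, are pairwise distinct; the remaining Eisenstein eigenvalue is $l+1$, which by the Ramanujan bound $|a_l(f)|\le 2\sqrt{l}<l+1$ is distinct from all of them. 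Hence $T_l$ is diagonalizable on $\X_\C$ with $g+1$ distinct eigenvalues, each eigenline one-dimensional.

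With simple spectrum in hand, the commutant of $T_l$ in $\End(\X_\C)$ is the algebra of operators diagonal in this eigenbasis, so any real $P$ commuting with $T_l$ acts on each eigenline by a scalar; being real and of finite order, that scalar is $\pm1$. Since $\G(p,l)$ is connected and $T_l$ is a nonnegative irreducible matrix, Perron--Frobenius gives a positive eigenvector $v_0$ for the top eigenvalue $l+1$, and a permutation matrix preserves the positive cone, so $Pv_0=v_0$. Therefore every $P$ is an involution and $G:=\Aut^*(\G(p,l))$ is an elementary abelian $2$-group, embedding via its signs on the cuspidal eigenlines into $(\Z/2)^g$. Because $\C[T_l]=\T_\C$, such a $P$ in fact commutes with \emph{all} Hecke operators on $\X^0$; invoking $\Q$-rationality of $P$ together with multiplicity one for the Hecke module $\X^0$ over $\T$, the sign is constant on each Galois orbit of eigenforms, so $P|_{\X^0}$ is multiplication by some $\epsilon_P\in\T_\Q$ with $\epsilon_P^2=1$, and $G$ embeds into $\{\epsilon\in\T_\Q:\epsilon^2=1\}\cong\prod_f\{\pm1\}$ indexed by newform classes $f$.

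Next I would exhibit $\Fr$ as one nontrivial such involution. It commutes with $T_l$ since $\T$ is commutative and $\Fr=T_p$ on $\X^0$; it has order dividing $2$ because supersingular curves satisfy $E^{(p^2)}\cong E$, so the vertex permutation $E\mapsto E^{(p)}$ squares to the identity; and it is nontrivial by Ogg's result, using $p>71$. Under the embedding above it corresponds to $(a_p(f))_f$, where each $a_p(f)\in\{\pm1\}$ by Atkin--Lehner theory for weight $2$ and level $\Gamma_0(p)$. This already gives $\{\id,\Fr\}\cong\Z/2\Z\le G$.

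The remaining, and by far the hardest, step is to show that $G$ contains no further element, i.e.\ that among the sign-involutions $\epsilon\in\prod_f\{\pm1\}$ only the trivial one and $(a_p(f))_f$ are realized by an actual $0/1$ permutation matrix of the vertices. My plan here is a rigidity argument combining the integrality and positivity of $P$ with the Perron structure: writing each vertex as $E_i=E_i^0+c_i v_0$ with $c_i>0$ its Perron weight, the relation $Pv_0=v_0$ shows $P$ permutes vertices of equal Perron weight while acting as $\epsilon_P$ on the projections $E_i^0$, and the constraint that $P$ has entries in $\{0,1\}$ must be played against the fixed-point count $\operatorname{tr}(P)=1+\sum_f[K_f:\Q]\,\epsilon_{P,f}$ and the known action of $\Fr$. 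I expect this realizability/rigidity step to be the main obstacle, as it is precisely where the coarse spectral information must be upgraded using the fine arithmetic of the supersingular module (its multiplicity-one structure over $\T$ and the Frobenius/Atkin--Lehner involution) to exclude all exotic sign patterns; the preceding reductions are comparatively formal.
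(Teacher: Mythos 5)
Your first three paragraphs are essentially correct and amount to a spectral rederivation of the paper's Lemma \ref{gtohecke}: your observation that $\C[T_l]=\T_\C$ forces simple spectrum, plus Perron--Frobenius to pin down the Eisenstein line, plus rationality and multiplicity one over $\T$, lands $\Aut^*(\G)$ inside the group of sign involutions $\epsilon\in\T_\Q$, $\epsilon^2=1$ --- which is exactly the paper's $(\T^\times)_{\mathrm{tor}}/\{\pm1\}$ in different clothing (the paper gets there via Emerton's theorem $\End_\T(\X^0)\cong\T$, you get there via the commutant of an operator with simple spectrum; your route is arguably more elementary). Your identification of $\Fr$ with the sign pattern $(a_p(f))_f$ via Atkin--Lehner is also correct.

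However, the step you defer --- showing that among all sign patterns in $\prod_f\{\pm1\}$ only the trivial one and $(a_p(f))_f$ are realized by an actual vertex permutation --- is not a technical remainder; it is the entire content of the theorem, and your proposed plan for it does not work. The trace constraint $\operatorname{tr}(P)=1+\sum_f[K_f{:}\Q]\,\epsilon_{P,f}\geq0$ together with positivity and integrality of $P$ only excludes sign patterns whose weighted sum is less than $-1$, leaving a vast number of exotic candidates untouched; no purely combinatorial or spectral rigidity argument of this kind is known, and the paper's own remark that larger automorphism groups occur (e.g.\ $\Aut^*(\G(73,7))$ of order $8$) shows the sign group can genuinely be populated, so any exclusion must use an input special to the situation. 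The paper's resolution is geometric and is where all the depth lies: torsion units of $\T$ are self-adjoint involutions for the Petersson product, hence preserve the canonical principal polarization $H$ on $J_0(p)$ (Lemma \ref{autj0}, using Mazur's theorem that $\T=\End(J_0(p))$); then, since $X_0(p)$ is not hyperelliptic for $p>71$, the Torelli theorem gives $\Aut_\C(J_0(p),H)/\{\pm1\}\cong\Aut_\C(X_0(p))$; and finally Ogg's theorem that $\Aut_\C(X_0(p))=\{1,w\}$ for $p>37$ caps the count at $2$. Nothing in your sketch can substitute for this chain --- in particular for Ogg's theorem, which is the ultimate source of the bound --- so the proposal as it stands has a genuine gap at its decisive step.
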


Conveniently, the question of how frequently the condition on $T_l$ is satisfied was answered by Koo, Stein, and Wiese \cite{heckecoeff} (in fact a stronger condition):
\begin{lemma}
    The set
    $$\{l\text{ prime}:\Q[T_l]=\T_\Q\}$$
    has density 1.
\end{lemma}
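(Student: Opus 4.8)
The plan is to translate the algebra-generation condition into the separability of a single characteristic polynomial, and then to invoke the theorem of Koo, Stein, and Wiese \cite{heckecoeff} together with an equidistribution argument to control the exceptional set. First I would pin down the structure of $\T_\Q$. Since $S_2(\Gamma_0(1))=0$, every weight $2$ cusp form of level $\Gamma_0(p)$ is a newform, so $\T_\Q$ is a semisimple commutative $\Q$-algebra $\T_\Q\cong\prod_{i=1}^r K_i$, where the product runs over the $\Gal(\overline{\Q}/\Q)$-orbits of newforms, $K_i=\Q(\{a_n(f_i)\})$ is the coefficient field of a representative $f_i$, and $\sum_i[K_i:\Q]=g$. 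Under this decomposition $T_l$ corresponds to the tuple $(a_l(f_i))_i$, with characteristic polynomial on $S_2(\Gamma_0(p))$ equal to $P_l(X)=\prod_i\prod_{\sigma:K_i\inj\overline{\Q}}(X-\sigma a_l(f_i))$ of degree $g$. Writing $\Q[T_l]\cong\Q[X]/(\operatorname{lcm}_i m_{l,i})$, where $m_{l,i}$ is the minimal polynomial of $a_l(f_i)$ over $\Q$, one checks that $\Q[T_l]=\T_\Q$ if and only if $\deg\operatorname{lcm}_i m_{l,i}=g$, i.e. if and only if the $g$ roots $\sigma a_l(f_i)$ are pairwise distinct, i.e. if and only if $\disc(P_l)\neq 0$. (Rationally, this recovers the complex condition of Theorem \ref{autthm}.) So it suffices to show $\disc(P_l)=0$ for a density $0$ set of $l$.

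Eigenvalues can collide in two ways, and I would bound each. A collision \emph{within} an orbit means $\sigma a_l(f_i)=\tau a_l(f_i)$ for $\sigma\neq\tau$, equivalently $a_l(f_i)$ fails to generate $K_i$. Here I would first record that the $f_i$ are non-CM and have no nontrivial inner twists: a CM elliptic curve over $\Q$ has potentially good reduction everywhere and hence non-squarefree conductor, so no weight $2$ trivial-nebentypus CM newform has prime level; and squarefree level with trivial nebentypus forces any twisting character to be unramified, hence trivial, so the fixed field of the inner twists is all of $K_i$. The theorem of Koo, Stein, and Wiese \cite{heckecoeff} then gives that $\{l:\Q(a_l(f_i))=K_i\}$ has density $1$, and intersecting over the finitely many $i$ preserves density $1$.

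A collision \emph{across} orbits means $a_l(f)=a_l(f')$ for newforms $f,f'$ in distinct orbits. Fixing an auxiliary prime $\lambda$, the attached $\lambda$-adic representations $\rho_f$ and $\rho_{f'}$ are non-isomorphic, both with determinant the cyclotomic character; by the big-image theorems of Ribet and Momose the image of $\rho_f\times\rho_{f'}$ is open in the subgroup cut out by the common determinant, so inside this image $\{(A,B):\operatorname{tr}A=\operatorname{tr}B\}$ is a proper closed, hence Haar-measure-zero, subset. Serre's equidistribution of the $\Frob_l$ then shows $\{l:a_l(f)=a_l(f')\}$ has density $0$, and the finite union over pairs of orbits remains density $0$. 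Combining with the previous paragraph, $\disc(P_l)\neq 0$ on a density $1$ set of $l$, which is the claim.

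The main obstacle is precisely the density-$1$ control of eigenvalue coincidences. The within-orbit case is the deep theorem of Koo, Stein, and Wiese, whose proof rests on big-image results for a single newform followed by Chebotarev, and which I would cite as a black box. The across-orbit case uses the same mechanism for a pair of non-isomorphic representations; the only genuinely new verifications are the routine structural facts that our newforms have neither CM nor nontrivial inner twists, both of which follow because the level $p$ is prime, hence squarefree, and the nebentypus is trivial.
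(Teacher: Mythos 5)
Your proposal is correct, and it is in fact more careful than the paper's own proof. Both arguments run along the same spine: the decomposition $\T_\Q\cong\prod_i K_{f_i}$ from \cite{ddt} (using that at prime level there are no oldforms), the identification of $T_l$ with the tuple $(a_l(f_i))_i$, the appeal to \cite[Corollary 1]{heckecoeff} for within-orbit generation $\Q[a_l(f_i)]=K_{f_i}$ after checking the no-inner-twist hypothesis, and the closing remark that a finite intersection of density $1$ sets has density $1$. Where you genuinely diverge is your second failure mode. The paper asserts that $\Q[T_l]=\T_\Q$ follows once each $a_l(f_i)$ generates its own field $K_{f_i}$, and this reduction is false as stated: if two distinct Galois orbits have isomorphic coefficient fields (for instance two rational orbits, which does occur at some prime levels), then a cross-orbit coincidence of minimal polynomials $m_{l,i}=m_{l,j}$ leaves $\Q[T_l]$ a proper ``diagonal'' subalgebra of $\prod_i K_{f_i}$ even though every $a_l(f_i)$ generates $K_{f_i}$. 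Your criterion --- $\Q[T_l]=\T_\Q$ if and only if the degree-$g$ characteristic polynomial $P_l$ is squarefree, i.e. $\disc(P_l)\neq 0$ --- is the correct formulation, and your handling of cross-orbit collisions (non-isomorphic $\lambda$-adic representations, Ribet--Momose openness of the image of $\rho_f\times\rho_{f'}$ in the common-determinant subgroup, Haar-measure zero of the trace-coincidence locus, then Serre's equidistribution of Frobenii) is the standard and legitimate way to show each set $\{l: a_l(f)=a_l(f')\}$ has density $0$. In terms of trade-offs: the paper's shorter argument is complete whenever the fields $K_{f_i}$ are pairwise non-isomorphic (then, given generation, distinct orbits cannot share a minimal polynomial), but in general it has a gap at exactly the step you supplied; your version costs an extra layer of big-image machinery but establishes the lemma for every prime $p$.

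One small blemish: your no-CM argument via CM elliptic curves only covers rational newforms, since a CM newform need not come from an elliptic curve over $\Q$. This is harmless, because CM is a self-twist by the ramified quadratic character of the CM field, so your own observation --- that squarefree level and trivial nebentypus force any twisting character to be unramified, hence trivial --- already excludes CM along with nontrivial inner twists.
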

\begin{proof}
    Recall the decomposition \cite[p. 40]{ddt}:
    $$\T_\Q\cong\bigoplus_{[f]}K_f,$$
    where the sum ranges over distinct Galois orbits of newforms $f\in S_2(\Gamma_0(p))$, and $K_f$ denotes the coefficient field of $f$.
    Under this isomorphism, $T_l=(a_l(f_1),\dots,a_l(f_n))$, where $f_1,\dots,f_n$ are representatives for such Galois orbits, and $a_l(f_i)$ denotes the $l$'th Fourier coefficient of $f_i$. Then to show that $\Q[T_l]=\T_\Q$, it is enough to show that $\Q[a_l(f_i)]=K_{f_i}$ for all $i$. Now as each $f_i$ is of squarefree level and has trivial Dirichlet character, \cite[Corollary 1]{heckecoeff} states that the set
    $$\{l\text{ prime}:\Q[a_l(f_i)]=K_{f_i}\}$$
    has density 1. Thus the lemma follows from the fact that finite intersections of density 1 sets have density 1.
\end{proof}
Hence, Theorem \ref{autthm} implies:
\begin{corollary}\label{autcor}
    For all primes $p>71$, the set
    $$\{l\text{ prime}:\Aut^*(\G(p,l))=\{id,\Fr\}\cong\Z/2\Z\}$$
    has density $1$.
\end{corollary}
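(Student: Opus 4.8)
The plan is to derive the Corollary directly from Theorem \ref{autthm} and the preceding Lemma; the one thing that genuinely needs checking is that the Lemma's hypothesis $\Q[T_l]=\T_\Q$ implies the hypothesis $\C[T_l]=\T_\C$ demanded by the theorem.

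For this I would argue by base change to $\C$. Writing $\T_\C=\T_\Q\tensor_\Q\C$ and noting $T_l\in\T_\Q$, the subalgebra $\C[T_l]$ is the image of the evaluation homomorphism $\C[x]\to\T_\C$, $x\mapsto T_l$, which is the base change along $\Q\inj\C$ of the evaluation homomorphism $\Q[x]\to\T_\Q$ whose image is $\Q[T_l]$. Since tensoring is right exact, base change preserves surjectivity, so $\Q[T_l]=\T_\Q$ forces $\C[T_l]=\T_\C$. (Concretely: if $1,T_l,T_l^2,\dots$ span $\T_\Q$ over $\Q$, then they span $\T_\Q\tensor_\Q\C=\T_\C$ over $\C$.) Feeding this into Theorem \ref{autthm}, every prime $l\neq p$ with $\Q[T_l]=\T_\Q$ satisfies $\Aut^*(\G(p,l))=\{\id,\Fr\}\cong\Z/2\Z$. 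The set of such primes therefore contains $\{l:\Q[T_l]=\T_\Q\}$, which the Lemma shows has density $1$; as a set of primes containing a density-$1$ set is itself of density $1$ (discarding the single prime $p$ changes nothing), the Corollary follows.

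I expect no real obstacle at this stage: the substance lies entirely upstream, in Theorem \ref{autthm} and in the Koo--Stein--Wiese result underlying the Lemma. The only subtlety worth flagging is the direction of the implication---the condition $\C[T_l]=\T_\C$ is \emph{weaker} than $\Q[T_l]=\T_\Q$, so the Lemma's rational statement is more than enough to trigger the complex hypothesis of the theorem, and no separate density argument for $\C[T_l]=\T_\C$ is required.
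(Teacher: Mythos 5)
Your proposal is correct and matches the paper's (implicit) argument exactly: the paper derives the Corollary by feeding the Lemma's density-$1$ set into Theorem \ref{autthm}, with the passage from $\Q[T_l]=\T_\Q$ to $\C[T_l]=\T_\C$ left as the obvious base-change observation you spell out. Your extra care in verifying that the rational condition implies the complex one is a worthwhile detail the paper glosses over, but it is not a different route.
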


It is easy to compute the automorphisms of $\G(p,l)$ using the \textit{SupersingularModule} package in SageMath. Specifically, the adjacency matrix $T$ and automorphism group $A$ can be computed via the Sage code:
\begin{verbatim}
T = SupersingularModule(p).hecke_matrix(l)
A = Graph(T,format='adjacency_matrix').automorphism_group()
\end{verbatim}
There are certainly examples where the automorphism group is larger, for instance $\Aut^*(\G(73,7))\cong(\Z/3\Z)^3$. There also examples where the condition $\C[T_l]=\T_C$ fails, yet the conclusion of Theorem \ref{autthm} still holds (take, for example, $p=73$, $l=41$).  It seems to the author that when the condition fails, extra automorphism may appear purely by coincidence.
\section{Proof of Theorem \ref{autthm}}

Let $\G:=\G(p,l)$. We will prove Theorem \ref{autthm} by relating automorphisms of $\G$ to automorphisms of $X_0(p)$. First, we use the condition on $T_l$ to show that an automorphism ``comes from the Hecke algebra'':

\begin{lemma}\label{gtohecke}
Suppose $\C[T_l]= \T_\C$. Then there is an injection $\Aut^*(\G)\inj(\T^\times)_\mathrm{tor}/\{\pm1\}$.
\end{lemma}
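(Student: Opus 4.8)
The plan is to attach to each graph automorphism the permutation it induces on the supersingular module, restrict that permutation to $\X^0$, and then recognize the resulting operator as a torsion unit of $\T$ by exploiting the cyclicity forced by the hypothesis $\C[T_l]=\T_\C$.

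First I would set up the dictionary between combinatorics and linear algebra. A class $\sigma\in\Aut^*(\G)$ is a permutation of the vertex set $\mathcal{J}$ preserving edge multiplicities; writing $P_\sigma\in\End_\Z(\X)$ for the associated permutation operator, this is precisely the requirement that $P_\sigma$ commute with the adjacency operator, which by (\ref{Tdesc}) is $T_l$. Since $P_\sigma$ merely permutes the basis vectors $E_i$, it fixes the total-degree functional and hence preserves $\X^0$; set $x_\sigma:=P_\sigma|_{\X^0}$. Then $x_\sigma$ commutes with $T_l$, is invertible with inverse $x_{\sigma^{-1}}$, and has finite order because $\Aut^*(\G)$ is finite, so $\sigma\mapsto x_\sigma$ is a group homomorphism from $\Aut^*(\G)$ into the torsion subgroup of $\End_\Z(\X^0)^\times$.

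The heart of the argument is to show $x_\sigma\in\T$. Rationally this is clean: because $\T$ acts faithfully on $\X^0$ and $\dim_\Q\T_\Q=g=\operatorname{rank}\X^0$, the decomposition $\T_\Q\cong\bigoplus_{[f]}K_f$ forces the faithful equidimensional $\T_\Q$-module $\X^0\tensor\Q$ to be free of rank one, so the commutant of $\T_\Q$ inside $\End_\Q(\X^0\tensor\Q)$ is $\T_\Q$ itself. The hypothesis $\C[T_l]=\T_\C$ means anything commuting with $T_l$ commutes with all of $\T_\C$, hence with $\T_\Q$, so $x_\sigma\in\T_\Q$. To descend from $\T_\Q$ to $\T$ I would view $x_\sigma$ as an element of $\End_\T(\X^0)$ and invoke that $\X^0$ is free of rank one over $\T$ (the multiplicity-one/Gorenstein property of the prime-level cuspidal Hecke algebra, in the spirit of Mazur and \cite{rib100}), whence $\End_\T(\X^0)=\T$ and therefore $x_\sigma\in\T$; being invertible and torsion, $x_\sigma\in(\T^\times)_\mathrm{tor}$.

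Finally I would verify injectivity after passing to $(\T^\times)_\mathrm{tor}/\{\pm1\}$. Every $P_\sigma$ fixes the all-ones vector $\mathbf{1}=\sum_i E_i$, and $\X\tensor\Q=(\X^0\tensor\Q)\oplus\Q\mathbf{1}$, so $x_\sigma=\id$ already forces $P_\sigma=\id$ and $\sigma=\id$. A collision modulo $\{\pm1\}$ would require a nontrivial $\sigma$ with $x_\sigma=-\id$ on $\X^0$, i.e. $P_\sigma(E_i-E_j)=E_j-E_i$ for all $i,j$; comparing basis vectors, this would force $\sigma$ to interchange one fixed vertex with every other vertex simultaneously, which is impossible once there are at least three vertices, and $p>71$ guarantees $g\geq2$. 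Thus the composite $\Aut^*(\G)\inj(\T^\times)_\mathrm{tor}/\{\pm1\}$ is injective. I expect the main obstacle to be exactly the integral descent $x_\sigma\in\T_\Q\Rightarrow x_\sigma\in\T$: rational cyclicity is automatic from the dimension count, but ruling out a strictly larger endomorphism ring genuinely requires the freeness of $\X^0$ over $\T$, so the substantive external input sits there.
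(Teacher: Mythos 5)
Your overall architecture coincides with the paper's: identify $\sigma$ with a permutation operator commuting with the adjacency operator $T_l$, restrict it to $\X^0$, use $\C[T_l]=\T_\C$ together with commutativity of $\T$ to upgrade ``commutes with $T_l$'' to ``commutes with all of $\T$,'' conclude that $x_\sigma$ lies in $\T$ because $\End_\T(\X^0)=\T$, and finally rule out $-\id$ as the restriction of a permutation to justify injectivity into the quotient by $\{\pm1\}$. Your handling of that last point is in fact more careful than the paper's (you note the need for at least three vertices), and your rational step is correct: faithfulness of $\T_\Q$ on $\X^0\tensor\Q$ plus the dimension count $\dim_\Q\T_\Q=g=\operatorname{rank}\X^0$ does force $\X^0\tensor\Q$ to be free of rank one over $\T_\Q$, so the commutant of $\T_\Q$ is $\T_\Q$ itself.

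The gap sits exactly where you predicted it: the integral descent. You justify $\End_\T(\X^0)=\T$ by asserting that $\X^0$ is free of rank one over $\T$, calling this the multiplicity-one/Gorenstein property. That assertion is not a theorem. By Emerton's work, local freeness of the supersingular character group at a maximal ideal $\mathfrak{m}$ of $\T$ is \emph{equivalent} to $\T_\mathfrak{m}$ being Gorenstein, and Kilford exhibited primes (e.g.\ $p=431$ and $p=503$, both well above $71$ and hence in the theorem's range) for which the weight-2, level-$p$ Hecke algebra fails to be Gorenstein at a maximal ideal of residue characteristic $2$. So freeness of $\X^0$ over $\T$ is false in general, and a proof resting on it would silently exclude precisely such $p$. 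What is true unconditionally --- and is exactly what the paper cites, namely Emerton's Theorem 0.6(ii) in \cite{em} --- is the weaker statement you actually need: the natural map $\T\to\End_\T(\X^0)$ is an isomorphism, with no freeness hypothesis. (A faithful module can have endomorphism ring equal to the base ring without being free; compare a nonprincipal ideal in a Dedekind domain.) The repair is purely a matter of citation: replace the freeness claim by Emerton's endomorphism theorem, and the rest of your argument goes through verbatim.
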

\begin{proof}
    Let $\sigma\in\Aut^*(\G)$. We may identify $\sigma$ with an element of $\Aut_{T_l}(\X)$, i.e. an automorphism of $\X$ that commutes with $T_l$. Since $\sigma$ is induced by a permutation of the basis $\mathcal{J}$ of $\X$, $\sigma$ restricts uniquely to an automorphism of $\X^0$. By abuse of notation, we use $\sigma$ to denote this restriction. Thus we have an injection 
    \begin{equation}\label{gtox0}
        \Aut^*(\G)\inj\Aut_{T_l}(\X^0).
    \end{equation}
    We know that $\sigma$ commutes with $T_l$ on $\X^0$, and so $\sigma$ commutes with every polynomial in $T_l$ with $\C$ coefficients, seen as an operator on $\X^0$. Then by the assumption that $\C[T_l]=\T_\C$, we get that in fact $\sigma$ commutes with the full Hecke algebra $\T$ on $\X^0$. Hence (\ref{gtox0}) extends to
    \begin{equation*}
        \Aut^*(\G)\inj\Aut_{\T}(\X^0).
    \end{equation*}
    Now by a result of Emerton \cite[Theorem 0.6(ii)]{em}, the natural map $\T\to\End_{\T}(\X^0)$ is an isomorphism\footnote{Emerton uses the notation $\T_0$ for what we call $\T$, reserving the latter for the Hecke algebra acting on $M_2(\Gamma_0(p))$, the space of all weight 2 modular forms for $\Gamma_0(p)$.}, thus we have a sequence of maps
    \begin{equation} \label{gtot0}
        \Aut^*(\G)\longinj\Aut_{\T}(\X^0)\longinj\End_{\T}(\X^0)\overset{\cong}\longrightarrow\T.
    \end{equation}
    The image of $\Aut^*(\G)$ in (\ref{gtot0}) lies in $(\T^\times)_\mathrm{tor}$ because the elements of $\Aut^*(\G)$ are invertible and of finite order. Finally, $-1\in\T$ can not be in the image of the map (\ref{gtot0}), since it will never induce an automorphism of $\X^0$ coming from a permutation of $\mathcal{J}=V(\G)$.
\end{proof}

Next, we want show that the image of the injection given by Lemma \ref{gtohecke} induces automorphisms of the modular Jacobian $J_0(p)$, as a polarized abelian variety. Recall that $J_0(p)=H^{0,1}(X_0(p))/H^1(X_0(p),\Z)$ comes equipped with a principal polarization induced by the Hermitian form on $H^{0,1}(X_0(p))$ defined by $H(\alpha,\beta):=2i\int_{X_0(p)}\overline{\alpha}\wedge\beta$ (see, for example, \cite[Prop 4.3]{theta}).

\begin{lemma}\label{autj0}
    $\Aut_\C(J_0(p),H)=(\T^\times)_{\mathrm{tor}}$.
\end{lemma}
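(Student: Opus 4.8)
The plan is to realize $\Aut_\C(J_0(p),H)$ inside the endomorphism algebra via the Rosati involution and then identify it with the torsion units of $\T$. For a principally polarized abelian variety, an endomorphism $u\in\End(J_0(p))$ underlies an automorphism preserving $H$ if and only if $u$ is invertible with $u^\dagger u=\id$, where $\dagger$ is the Rosati involution attached to $H$; thus
\begin{equation*}
\Aut_\C(J_0(p),H)=\{u\in\End(J_0(p))^\times : u^\dagger u=\id\}.
\end{equation*}
Two structural facts drive the argument: (i) $\T$ is stable under $\dagger$ and $\dagger$ restricts to the identity on $\T$; and (ii) $\T_\Q$ is a product of totally real number fields. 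For (i) I would use that the $T_n$ with $(n,p)=1$ are self-adjoint for the Petersson pairing, that at prime level $T_p=U_p$ coincides with $-w_p$ on the (entirely new) space of weight $2$ forms and is likewise self-adjoint, and that $\T$ is commutative and generated by the $T_n$, so $\dagger$ fixes a generating set. Fact (ii) is the standard statement that the Hecke eigenvalues $a_n(f)$ of weight $2$ forms of trivial nebentypus are totally real.

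Granting (i) and (ii), the inclusion $(\T^\times)_\mathrm{tor}\subseteq\Aut_\C(J_0(p),H)$ is immediate: a torsion unit $u\in\T$ satisfies $u^2=\id$, since the only roots of unity in a totally real algebra are $\pm1$, and because $\dagger$ fixes $\T$ we obtain $u^\dagger u=u^2=\id$, so $u$ preserves $H$.

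The reverse inclusion is the main obstacle, and I would handle it by first pinning down the endomorphism algebra. Because the level $p$ is prime, every weight $2$ form is a newform, and level-$\Gamma_0(p)$ newforms carry no complex multiplication and no nontrivial inner twists; hence each associated factor $A_f$ has $\End_\C(A_f)\tensor\Q=K_f$, while distinct Galois orbits give non-isogenous factors by Eichler--Shimura. Therefore $\End_\C(J_0(p))\tensor\Q=\prod_{[f]}K_f=\T_\Q$, which is commutative, so the Rosati involution is the identity on the whole endomorphism algebra. Consequently any $u\in\Aut_\C(J_0(p),H)$ satisfies $u^\dagger u=u^2=\id$ and is a torsion unit of the order $\End(J_0(p))$. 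It then remains to promote this from $\T_\Q$ to $\T$ integrally, i.e. to verify that every such $u$ lies in $\T$ rather than in some larger order of $\T_\Q$; I expect this integral refinement---showing $\End(J_0(p))=\T$, or at least that the $\dagger$-fixed units land in $\T$---to be the delicate point, which I would address using that $H_1(X_0(p),\Z)$ is free of rank $2$ over $\T$ together with the faithfulness results of Ribet and Emerton cited above.
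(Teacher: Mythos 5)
Your forward inclusion $(\T^\times)_{\mathrm{tor}}\subseteq\Aut_\C(J_0(p),H)$ is correct and is essentially the paper's own argument recast in Rosati language: the paper likewise uses that $\T_\Q$ is a product of totally real fields (so torsion units are involutions $(\pm1,\dots,\pm1)$) and that $\T$ is self-adjoint for the Petersson product (your observation that $U_p=-w_p$ on the all-new space at prime level supplies the justification the paper leaves implicit). One local slip: ``$\End_\C(J_0(p))\tensor\Q$ is commutative, so the Rosati involution is the identity'' is not a valid implication --- a CM field is commutative and its Rosati involution is complex conjugation. What you need is either Albert's classification (a product of totally real fields admits only the trivial positive involution) or, more simply, $\Q$-linearity combined with your fact (i), since $\T$ spans $\T_\Q$. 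That is repairable.

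The genuine gap is in the reverse inclusion, and you have correctly located but not filled it: everything hinges on the integral statement $\End_\C(J_0(p))=\T$, which is exactly what the paper imports from Mazur \cite[(9.5)]{maz}; with it, the paper gets \emph{both} inclusions almost immediately (your direction as above, the other from finiteness of the automorphism group of a polarized abelian variety). Your primary proposal for this ``delicate point'' --- that $H_1(X_0(p),\Z)$ is free of rank $2$ over $\T$ --- rests on a premise that is false for some primes: since the intersection pairing on $H_1$ is perfect and $\T$-bilinear (by your own fact (i)), freeness of rank $2$ would give $\T_\mathfrak{m}^2\cong\Hom_{\Z_\ell}(\T_\mathfrak{m},\Z_\ell)^2$, hence by Krull--Schmidt that $\T$ is Gorenstein at every maximal ideal; but Kilford (J.\ Number Theory, 2002) exhibited non-Gorenstein Hecke algebras at the prime levels $p=431$ and $p=503$, where multiplicity one fails. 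So that route cannot work in general. Your parenthetical alternative --- Ribet's faithfulness of $\T$ on $\X^0$ together with Emerton's isomorphism $\T\cong\End_\T(\X^0)$ --- actually can be made to close the gap: granting your rational computation $\End_\C(J_0(p))\tensor\Q=\T_\Q$, every endomorphism is $\Q$-rational (being a $\Q$-multiple of an element of $\T$ in a torsion-free endomorphism ring), hence acts on the N\'eron model and on $\X^0$, commutes with $\T$ by commutativity of $\T_\Q$, and therefore lands in $\End_\T(\X^0)=\T$. But as written this remains an unexecuted sketch, and the rational computation itself leans on Ribet's no-inner-twist and non-isogeny theorems rather than just Eichler--Shimura; the clean fix is to cite Mazur's theorem as the paper does.
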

\begin{proof}
    By \cite[Proposition (9.5)]{maz}, $\T$ is the full ring of endomorphisms of $J_0(p)$ over $\C$ (not necessarily preserving the polarization). Since the automorphism group of a polarized abelian variety is finite \cite[p.95]{rosenab}, it follows that $\Aut_\C(J_0(p),H)\subset(\T^\times)_{\mathrm{tor}}$. Thus it remains to show that any $T\in(\T^\times)_{\mathrm{tor}}$ preserves $H$. Equivalently, we may show that $T$ preserves the Hermitian form:
    $$H'(\alpha,\beta):=H(\overline{\alpha},\overline{\beta})=2i\int_{X_0(p)}\alpha\wedge\overline{\beta}$$
    on $H^{1,0}(X_0(p))$. Under the usual identification of $H^{1,0}(X_0(p))$ with $S_2(\Gamma_0(p))$, $H'$ is just the Petersson inner product, and we want to show that $T$ is an isometry.
    
    $\T_\Q$ is a finite product of totally real number fields \cite[p. 40]{ddt}, so the elements of $(\T^\times)_\mathrm{tor}$ are of the form $T=(\pm1,\dots,\pm1)\in\T_\Q$ (as a totally real field has no extra roots of unity). In particular, $T$ must be an involution. But $T$ is self-adjoint with respect to the Petersson product (as is all of $\T$), and a self-adjoint involution is clearly an isometry.
\end{proof}

\begin{proof}[Proof of Theorem \ref{autthm}]
    Combining Lemmas \ref{gtohecke} and \ref{autj0} shows that there is an injection:
    \begin{equation}\label{gtojac}
        \Aut^*(\G)\inj\Aut_\C(J_0(p),H)/\{\pm1\}.
    \end{equation}
    Since $p>71$, $X_0(p)$ is not hyperelliptic \cite[Théorème 1]{oggfrench}, so a corollary of the Torelli theorem \cite[Appendix, Théorème 3]{serretorelli} states that $\Aut_\C(J_0(p),H)/\{\pm1\}\cong\Aut_\C(X_0(p))$. Combining this with (\ref{gtojac}) gives an injection:
    \begin{equation*}\label{biginj}
        \Aut^*(\G)\inj\Aut_\C(X_0(p)).
    \end{equation*}
    Ogg \cite[Théorème 2]{oggfrench} proved that for $p>37$, $\Aut_\C(X_0(p))=\{1,w\}$, where $w$ is the Atkin-Lehner involution of $X_0(p)$, hence $|\Aut^*(\G)|\leq 2$. Finally, \cite[\S3]{oggfrench} shows that when $p>71$, not all supersingular $j$-invariants lie in $\F_p$, and so the Frobenius gives a nontrivial automorphism of $\G$. Thus $\Aut^*(\G)=\{\id,\Fr\}$.
    
    It is worth noting that $w$ naturally corresponds to the Frobenius automorphism of $\G$, since $w$ acts on the supersingular points of $X_0(p)_{\F_p}$ as the $p$'th power Frobenius morphism \cite[Proposition 3.8(i)]{rib100}.
\end{proof}

\bibliographystyle{plain}
\bibliography{refs.bib}

\end{document}